\title{New curiosities in the menagerie of corks}
\author[Hayden]{Kyle Hayden} 
\address{Columbia University, New York, NY}
\email{hayden@math.columbia.edu}
\author[Piccirillo]{Lisa Piccirillo} 
\address{MIT, Cambridge, MA}
\email{piccirli@mit.edu}
\theoremstyle{plain}
\newtheorem*{ques*}{Question}
\newtheorem{mainthm}{Theorem}[section]
\theoremstyle{definition}
       \newtheorem*{rem*}{Remark}             
\renewcommand{\S}{\textsection}
\begin{document}

\begin{abstract}
A cork is a smooth, contractible, oriented, compact 4-manifold $W$ together with a self-diffeomorphism $f$ of the boundary 3-manifold that cannot extend to a self-diffeomorphism of $W$; the cork is said to be strong if $f$ cannot extend to a self-diffeomorphism of \emph{any} smooth integer homology ball bounded by $\partial W$. Surprising recent work of Dai, Hedden, and Mallick showed that most of the well-known corks in the literature are strong. We construct the first non-strong corks, which also give rise to new examples of absolutely exotic Mazur manifolds. Additionally we give the first examples of corks where the diffeomorphism of $\partial W$ can be taken to be orientation-reversing.
\end{abstract}

\maketitle
\vspace{-20 pt}

A \emph{cork} is a smooth, contractible, oriented, compact 4-manifold $W$ together with a self-diffeomorphism $f$ of the boundary 3-manifold that cannot extend to a self-diffeomorphism of $W$. Corks are of particular interest in 4-manifold topology because any pair of simply connected \emph{exotic} 4-manifolds $X$ and $X'$ (i.e.~manifolds which are homeomorphic but not diffeomorphic) can be related by removing an embedded cork $W$ from $X$ and regluing it via $f$  to obtain $X'$ \cite{CFHS,matveyev}. Since the pursuit of new exotic phenomena is a primary goal of  
4-manifold topology, so too is the search for new phenomena in the theory of corks. 

 A cork $(W,f)$ is said to be \emph{strong} if $f$ cannot be extended across \emph{any} integer homology ball that $\partial W$ might bound. Extending a 3-manifold self-diffeomorphism over a 4-manifold it bounds depends inherently on the topology of the particular 4-manifold. It is therefore surprising that many of the corks in the existing literature turn out to be strong; Lin, Ruberman, and Saveliev originally showed that the Akbulut cork is strong \cite{lrs}, and Dai, Hedden, and Mallick recently exhibited families of strong corks which encompass most of the corks in the literature \cite{dhm:corks}. In light of this, Dai, Hedden, and Mallick ask if every cork is strong \cite[Question~1.14]{dhm:corks}, expecting a negative answer. 

\begin{mainthm}\label{thm:main}
Not all corks are strong.
\end{mainthm}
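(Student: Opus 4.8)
The plan is to exhibit an explicit cork $(W,f)$ together with an integer homology ball $Z$ with $\partial Z = \partial W$ and a diffeomorphism of $Z$ restricting to $f$ on the boundary. The natural source of such a $Z$ is a second contractible (or Mazur) manifold $W'$ sharing the same boundary: if $W$ and $W'$ are glued along their common boundary in the two possible ways (via $\mathrm{id}$ versus via $f$) and these closed manifolds turn out to be diffeomorphic, that diffeomorphism will typically be visible as an extension of $f$ over $W'$. So the first step is to find a pair of homeomorphic-but-not-obviously-diffeomorphic contractible 4-manifolds $W, W'$ with $\partial W = \partial W'$, where $f$ is a twist on the boundary (say along a sphere or torus coming from a symmetric link/plumbing description). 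I would look among Mazur-type manifolds built from a knot $K$ together with a dot-and-zero handle picture, choosing $K$ so that the boundary admits an obvious involution $f$ that swaps the roles of the two natural fillings.

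The second step is to verify that $(W,f)$ really is a cork, i.e.\ that $f$ does \emph{not} extend over $W$ itself. This is the part where one needs a genuine obstruction, and the cleanest route is Floer-theoretic: compute an invariant (Heegaard Floer $d$-invariants of the branched double cover, the Frøyshov/$\delta$-type invariant, or an involutive/equivariant correction term) of the closed manifold $W \cup_f W$, or equivalently detect that regluing $W$ to a standard 4-manifold by $f$ changes the smooth structure. Alternatively, since the construction is designed so that $f$ acts on $\partial W$ in an interesting way, one can borrow an existing cork in the literature as a "seed": arrange that $W$ contains a known cork $(C, f_C)$ with $f = f_C$ supported in $C$, so that non-extendability over $W$ follows once we know it is not strong-relative-to-$W$ — but this is circular, so the honest approach is the direct Floer computation on the double. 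Either way, the key inputs are the surgery formula and the computability of these invariants for the specific plumbed/Mazur boundary.

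The third and crucial step is to produce the \emph{extension} of $f$ across $Z := W'$: I would exhibit a handle-move sequence (slides, cancellations, and a Gluck-type or log-transform-type move) that converts the handle diagram of $W' \cup_f \partial$ into the handle diagram of $W' \cup_{\mathrm{id}} \partial$, thereby realizing $f$ as the boundary restriction of a self-diffeomorphism of $W'$; concretely this amounts to recognizing that the two ways of capping $\partial W$ with $W'$ give diffeomorphic manifolds and tracking the diffeomorphism back to the boundary. Because $W'$ is contractible, $\partial Z$ bounds an integer homology ball, so this genuinely witnesses non-strongness. Finally I would check that $W'$ (and its mirror pairing with $W$) furnishes a new absolutely exotic Mazur manifold, and that by choosing $f$ to be orientation-reversing on the symmetric boundary I also get the orientation-reversing examples.

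I expect the main obstacle to be the simultaneous constraint in steps two and three: the diffeomorphism $f$ must be subtle enough that the Floer invariants of $W \cup_f W$ differ from those of $W \cup_{\mathrm{id}} W = S^4$ (so $(W,f)$ is a cork), yet flexible enough that swapping $W$ for the companion filling $W'$ kills that obstruction (so $(W,f)$ is not strong). Threading this needle requires a clever choice of the defining knot/plumbing so that the relevant correction terms are sensitive to the \emph{specific} filling rather than just to the boundary 3-manifold — essentially, one needs an invariant that is a cork obstruction but not a strong-cork obstruction, which is exactly the phenomenon Dai--Hedden--Mallick's work suggested should be rare, so most of the effort goes into finding the example rather than into any single calculation.
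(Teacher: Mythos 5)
Your high-level strategy---exhibit two contractible fillings $W$, $W'$ of the same boundary and a diffeomorphism $f$ that extends over one but not the other---is exactly the right skeleton, and it is the skeleton of the paper's proof. But as written the proposal is a research plan rather than a proof, and its two load-bearing steps are the ones where the paper's actual ideas are missing. First, you treat the extension of $f$ over the companion filling as a ``third and crucial step'' to be realized by an ad hoc handle-move sequence. The paper makes this step automatic: it builds $W$ as a boundary-connected sum $C \natural C'$ of two copies of a Mazur manifold arranged with a $180^\circ$ rotational symmetry, defines $F$ to be that symmetry, and sets $f = F|_{\partial W}$. The cork is then the \emph{other} filling $W'$ (obtained by swapping the dot and the zero on one half of the diagram); non-strongness costs nothing because $f$ extends over $W$ by construction. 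You never specify a mechanism that guarantees your handle-move sequence exists, so step three of your plan has no content yet.

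Second, and more seriously, your proposed obstruction for showing $(W,f)$ is a cork is Floer-theoretic data attached to $(\partial W, f)$ (involutive/equivariant correction terms) or to the double $W \cup_f W$. Any invariant of the pair $(\partial W, f)$ alone is by definition a \emph{strong}-cork obstruction---this is precisely the Dai--Hedden--Mallick framework---so it can never certify that a non-strong cork is a cork; and closed-manifold Floer invariants of the homotopy $4$-sphere $W\cup_f W$ are not known to be computable or nontrivial. You flag this tension yourself (``one needs an invariant that is a cork obstruction but not a strong-cork obstruction'') but do not resolve it. The paper's resolution is elementary and filling-dependent: it exhibits a curve $\mu \subset \partial W'$ such that $f(\mu)$ is visibly slice in $W'$ (it is the meridian of a $2$-handle) while $\mu$ itself is not slice in $W'$, the latter following from an embedding of $W'$ into a larger manifold $X$ that retracts the question to non-sliceness of a meridian in the original Mazur piece $C$ (the ``reasonably nice'' hypothesis, already known for the Akbulut cork). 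No Floer computation is required anywhere. Without replacing your step two by an obstruction of this filling-sensitive type, the proposal cannot be completed along the lines you describe.
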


In the proof of Theorem~\ref{thm:main}, we give a  straightforward recipe for building a non-strong cork out of any reasonably nice Mazur-type manifold you already have on hand. Notably, our proof requires no new obstructive calculations. We then present an explicit family of examples in Figure~\ref{fig:family}; these corks have some additional interesting properties we record as Theorem \ref{thm:technical}. To state the theorem, we recall that a contractible smooth 4-manifold is \emph{Mazur-type} if it can be described with a single 0-, 1-, and 2-handle; these are the simplest compact, contractible, smooth 4-manifolds other than $B^4$. 

\begin{mainthm}\label{thm:technical}
For integers $|n| \gg 0$, the 4-manifolds $W_n$ and $W_n'$ in Figure~\ref{fig:family} are (absolutely) exotic Mazur-type manifolds, and the indicated boundary involution $f$ extends to a diffeomorphism of $W_n$ but not to a diffeomorphism of $W_n'$.
\end{mainthm}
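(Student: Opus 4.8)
The plan is to read essentially everything off the Kirby diagrams in Figure~\ref{fig:family} and to import the single genuinely gauge-theoretic fact from the literature, exactly as advertised in the introduction. Write $Y_n := \partial W_n = \partial W_n'$ for the common boundary and let $f$ be the indicated involution of $Y_n$. I would establish, in order: (a) each of $W_n$ and $W_n'$ is Mazur-type with $\partial W_n = \partial W_n'$; (b) $f$ extends to a self-diffeomorphism $\widetilde f$ of $W_n$, so $W_n$ is a contractible---hence integer homology ball---filling of $Y_n$ over which $f$ extends; (c) $f$ does \emph{not} extend over $W_n'$, so $(W_n',f)$ is a cork, which by (b) is non-strong; and (d) $W_n$ is not diffeomorphic to $W_n'$, so that---being contractible $4$-manifolds with the same boundary, hence homeomorphic by Freedman---they form an absolutely exotic Mazur pair.

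Steps (a) and (b) should be routine Kirby calculus. For (a) one reads the handle structure off each picture (one $0$-, one $1$-, one $2$-handle) and checks that the two diagrams differ only by a local modification---a clasp or cork-twist region---that leaves the boundary unchanged, recording an explicit diffeomorphism $\partial W_n \to \partial W_n'$ to serve as $f$. For (b) the recipe should be arranged so that $f$ is visibly realized by an order-two symmetry of the \emph{entire} diagram of $W_n$ (a rotation or reflection carrying the attaching link to itself and fixing the $1$-handle); a short sequence of handle slides then shows that this symmetry restricts to the prescribed $f$ on $Y_n$, and being order two it is an involution.

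Step (c) is the heart of the matter and the step I expect to be the main obstacle: obstructing the extension of a boundary diffeomorphism is the hard part of any cork argument. Following the strategy flagged in the introduction, I would \emph{not} compute a new invariant but instead set up the construction so that an extension of $f$ over $W_n'$ would force an extension of the standard cork twist $\tau$ over a familiar Mazur-type cork $C$ sitting inside $W_n'$---or over a homology ball bounded by $\partial C$---contradicting the strong-cork theorems of Dai--Hedden--Mallick \cite{dhm:corks}, or in the base case of Lin--Ruberman--Saveliev \cite{lrs}. The delicate part here is making this ``forcing'' watertight: one must arrange that the embedding of $C$ (equivalently, the gluing region where the twist is inserted) is rigid enough that any extension of $f$ genuinely descends to an extension of $\tau$. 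The role of the hypothesis $|n| \gg 0$ should be precisely to guarantee that this building block is honestly a (strong) cork for all large $|n|$, i.e.\ that the imported obstruction does not vanish; verifying this amounts to locating the known cork inside $W_n'$ and citing the relevant case of \cite{dhm:corks}.

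For step (d), once (b) and (c) are in hand the absolute exoticness follows formally from control on the isotopy classes of self-diffeomorphisms of $Y_n$: if every self-diffeomorphism of $Y_n$ is isotopic to $\id$ or to $f$, then any diffeomorphism $\psi\colon W_n \to W_n'$ can be adjusted, by a collar isotopy and possibly by precomposition with $\widetilde f$, to restrict to $\id$ on $Y_n$; conjugating $\widetilde f$ by the adjusted diffeomorphism then yields a self-diffeomorphism of $W_n'$ extending $f$, contradicting (c). So I would either compute $\pi_0\Diff(Y_n)$ outright---still within the ``no new obstructive calculations'' regime, being classical $3$-manifold topology via the geometric decomposition of $Y_n$---or, failing that, rigidify the boundary in the style of Akbulut--Ruberman so that only those two isotopy classes can occur as restrictions of a diffeomorphism $W_n \to W_n'$ (the same rigidity needed in step (c)). The orientation-reversing phenomenon promised in the abstract then comes for free if $f$ is built to reverse orientation on $Y_n$: step (b) exhibits an orientation-reversing symmetry of $W_n$, while (c) says $W_n'$ admits no such symmetry restricting to $f$.
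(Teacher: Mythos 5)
Your steps (a), (b), and (d) track the paper's proof closely: Mazur-type is checked by exhibiting cancelling handle pairs (a Hopf sublink in $W_n'$, a handle slide in $W_n$), $f$ extends over $W_n$ by the visible symmetry of the diagram, and the exoticness argument is exactly the conjugation trick you describe, made rigorous by computing $\pi_0\Diff(Y_n)$ for $|n|\gg 0$ (the paper realizes $Y_n$ as $-1/2n$-surgery on a knot $\gamma$ in $Y_0$ with hyperbolic exterior, then uses Thurston's Dehn surgery theorem, Mostow rigidity, and the short-geodesic argument to identify $\operatorname{Isom}(Y_n)$ with $\operatorname{Isom}(Y_0\setminus\gamma)\cong\mathbb{Z}_2$; your ``geometric decomposition'' sketch is the right idea but this hyperbolic-filling mechanism is the part that actually needs $|n|\gg 0$).

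The genuine gap is in step (c), which you correctly identify as the heart of the matter, and your proposed mechanism there is not the paper's and I don't see how to make it work. You want an extension of $f$ over $W_n'$ to force an extension of the cork twist $\tau$ over a known strong cork $C$ (or over a homology ball bounded by $\partial C$), and then to quote \cite{dhm:corks} or \cite{lrs}. There are two problems. First, no mechanism is given for this ``forcing'': $f$ lives on $Y_n=\partial W_n'$, not on $\partial C$, and an extension of $f$ over $W_n'$ does not hand you any diffeomorphism of $\partial C$ at all, let alone one extending $\tau$ over a filling of $\partial C$. Second, and more conceptually, any obstruction that factors through strongness-type data --- i.e.\ through $(Y_n,f)$ together with the mere fact that the filling is a homology ball --- is doomed here, because $Y_n$ bounds the homology ball $W_n$ over which $f$ \emph{does} extend; the whole point of the theorem is that the obstruction must see the particular smooth structure of $W_n'$. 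The paper's actual argument is a slice-disk obstruction, not an extension obstruction: $f$ carries the meridian $\mu$ of the dotted circle of $C\subset W_n'$ to the meridian of the $2$-handle curve $j'$, which bounds a smooth disk in $W_n'$; if $f$ extended, $\mu$ would be slice in $W_n'$. Non-sliceness of $\mu$ in $W_n'$ is then reduced, by filling the carved disk to embed $W_n'$ into $X = C\,\cup\,(\text{split }0\text{-framed }2\text{-handle})$ and pushing any slice disk off the split handle, to non-sliceness of $\mu$ in $C$ itself --- the ``reasonably nice'' hypothesis, known for the Akbulut cork by \cite{akbulut-matveyev}. That curve-tracking and carving/filling argument is the missing idea; without it (or a substitute that genuinely uses the interior of $W_n'$), step (c) does not go through.
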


The methods we develop in the proof of Theorem \ref{thm:main} can be naturally modified to produce corks exhibiting other new phenomena:

\begin{mainthm}\label{thm:achiral}
There exist infinitely many contractible, oriented 4-manifolds $W$ such that $W$ is homeomorphic but not diffeomorphic to $-W$.
\end{mainthm}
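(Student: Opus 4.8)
The plan is to obtain Theorem~\ref{thm:achiral} as a variant of the construction underlying Theorems~\ref{thm:main} and~\ref{thm:technical}. Recall first that a homology $3$-sphere bounds a unique contractible topological $4$-manifold (Freedman), so any two compact, contractible, smooth $4$-manifolds with diffeomorphic boundary are homeomorphic. Since $\partial(-W) = -\partial W$, a contractible $W$ is therefore homeomorphic to $-W$ as soon as $\partial W$ is amphichiral, i.e.\ admits an orientation-reversing self-diffeomorphism. Conversely, any diffeomorphism $W \to -W$ restricts to an orientation-reversing self-diffeomorphism of $\partial W$ that extends over $W$. So it suffices to produce infinitely many contractible $4$-manifolds $W$ with amphichiral boundary such that no orientation-reversing self-diffeomorphism of $\partial W$ extends over $W$; and since we only need non-diffeomorphism, it is enough to rule this out by a single smooth invariant in each case.

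I would build such $W$ by feeding a carefully chosen Mazur-type manifold into the recipe from the proof of Theorem~\ref{thm:main}, obtaining a pair $(W, W')$ as in Theorem~\ref{thm:technical}: absolutely exotic Mazur-type manifolds with common boundary $Y = \partial W = \partial W'$ and a boundary involution $f$ that extends over $W$ but not over $W'$. The new point is to choose the input so that (i) $Y$ is amphichiral and (ii) $W' \cong -W$. One expects to achieve this by arranging the defining Kirby diagram of $W$ to carry a symmetry realizing $W' \cong -W$ at the level of handle moves --- for instance by building $W$ from a knot together with a mirror image, or from a diagram whose reflection reproduces the cork-twisted diagram --- while keeping the Floer-theoretic complexity needed for the recipe to still certify that $(W, W')$ is exotic. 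Granting (i) and (ii): Freedman's classification gives $W \cong_{\mathrm{TOP}} -W$, while Theorem~\ref{thm:technical} gives that $W$ and $W' = -W$ are not diffeomorphic; equivalently, the orientation-preserving boundary involution $f$ extends over $W$ but not over $-W = W'$, which obstructs any diffeomorphism $W \to -W$.

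Infinitely many examples then come from the family $\{W_n\}$ of Theorem~\ref{thm:technical}: the boundaries $\partial W_n$ can be taken pairwise non-homeomorphic (distinguished, say, by the Casson invariant or by Heegaard Floer $d$-invariants), so the $W_n$ are pairwise non-homeomorphic contractible $4$-manifolds, each homeomorphic but not diffeomorphic to its orientation reverse. The crux of the argument is Step~(i)--(ii) above: making $\partial W$ amphichiral requires building $W$ from fairly symmetric data, and one must verify that this symmetry does not simultaneously produce a diffeomorphism $W \to -W$. Conceptually there is no conflict --- the distinguishing feature, namely extendability over the $4$-manifold of the orientation-preserving involution $f$ of $\partial W$, is a genuinely $4$-dimensional and orientation-sensitive property --- but turning this into an explicit family is where the real work lies: one must exhibit a Mazur-type manifold with amphichiral boundary for which the non-strong-cork recipe nonetheless detects exoticness, and check that the handle moves relating $W$ and $W'$ are compatible with a diagram reflection, so that one genuinely has $W' \cong -W$ on the nose and not merely up to homeomorphism. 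As in Theorems~\ref{thm:main} and~\ref{thm:technical}, this should be bookkeeping with known ingredients rather than a new obstruction.
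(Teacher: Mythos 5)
Your overall strategy coincides with the paper's: find contractible $W$ with amphichiral boundary $Y$, get $W \cong_{\mathrm{TOP}} -W$ from Freedman, and then rule out a diffeomorphism by showing that no orientation-reversing self-diffeomorphism of $Y$ extends to a diffeomorphism $W \to -W$. You also correctly identify that $-W$ should be realized as the ``role-reversed'' manifold $W'$ of the Theorem~\ref{thm:main} recipe, so that the meridian/slice-disk argument obstructs extending the obvious symmetry $f$. But the proposal stops exactly where the proof begins, and the two deferred steps are not bookkeeping. First, you never exhibit a family satisfying your conditions (i) and (ii). The family of Theorem~\ref{thm:technical} does not work --- its boundaries have no reason to be amphichiral --- and the paper has to design a new diagram (Figure~\ref{fig:WandMirror}), containing embedded copies of both the Akbulut cork and its mirror together with a pair of $n$- and $(-n)$-framed curves, arranged so that a $180^\circ$ rotation followed by swapping the dots on the black curves (a cork twist, hence a homeomorphism supported away from the boundary) carries $W_n$ to $-W_n$.

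Second, and more seriously, you cannot cite Theorem~\ref{thm:technical} for the non-diffeomorphism, and ``a single smooth invariant'' does not suffice. The invariant in play (non-sliceness of $\mu$) only obstructs the extension of the \emph{particular} boundary map $f$; an arbitrary diffeomorphism $W \to -W$ could a priori restrict to some other orientation-reversing self-diffeomorphism of $Y$. One therefore needs to know that every orientation-reversing self-diffeomorphism of $Y$ is isotopic to $f$, i.e.\ a mapping class group computation for the specific boundary of the new family. The paper does this from scratch: it realizes $Y_n$ as $\pm 1/n$-surgery on a two-component link $\ell \subset Y_0$, verifies by computer that the exterior is hyperbolic with $\operatorname{Isom}(Y_0 \setminus \ell) \cong \mathbb{Z}_2$, and invokes Thurston's hyperbolic Dehn surgery theorem and Mostow rigidity for $|n| \gg 0$. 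That example-specific computation is the real content of the non-diffeomorphism statement (and, as a byproduct, the distinct hyperbolic volumes give infinitely many distinct examples, so the Casson-invariant digression is unnecessary). Your reduction is sound in outline, but as written the proposal asserts rather than proves the two claims on which everything rests.
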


If such a 4-manifold $W$ is embedded in a closed, oriented 4-manifold $X$, then one can produce a potentially exotic copy of $X$ by cutting out $W$, reversing its orientation, and regluing $W$ along the orientation-reversing boundary diffeomorphism. We hope that the question of whether such a twist can indeed produce an exotic copy of $X$ (and what special properties such an exotic pair might have)  will prompt 
 further study of these \emph{orientation-reversing corks}.

The proofs of all three theorems are technically extremely straightforward once the reader is equipped with our recipes and any \emph{reasonably nice Mazur manifold}, defined as follows: Let $L$ be a 2-component link of unknots with linking number one. By viewing one component as a dotted circle and the other as the attaching curve of a zero-framed 2-handle, we obtain a handle diagram for a Mazur-type manifold. We say that a Mazur manifold $C$ is \emph{reasonably nice} if it admits such a handle diagram in which the meridian of the dotted circle defines a knot $\mu \subset \partial C$ that does not bound a smoothly embedded disk in $C$. The Akbulut cork \cite{akbulut:cork}, presented on the left in Figure~\ref{fig:akbulut}, is reasonably nice \cite{akbulut-matveyev}, as are many other Mazur manifolds in the literature.
\begin{figure}\center
\def\svgwidth{.75\linewidth}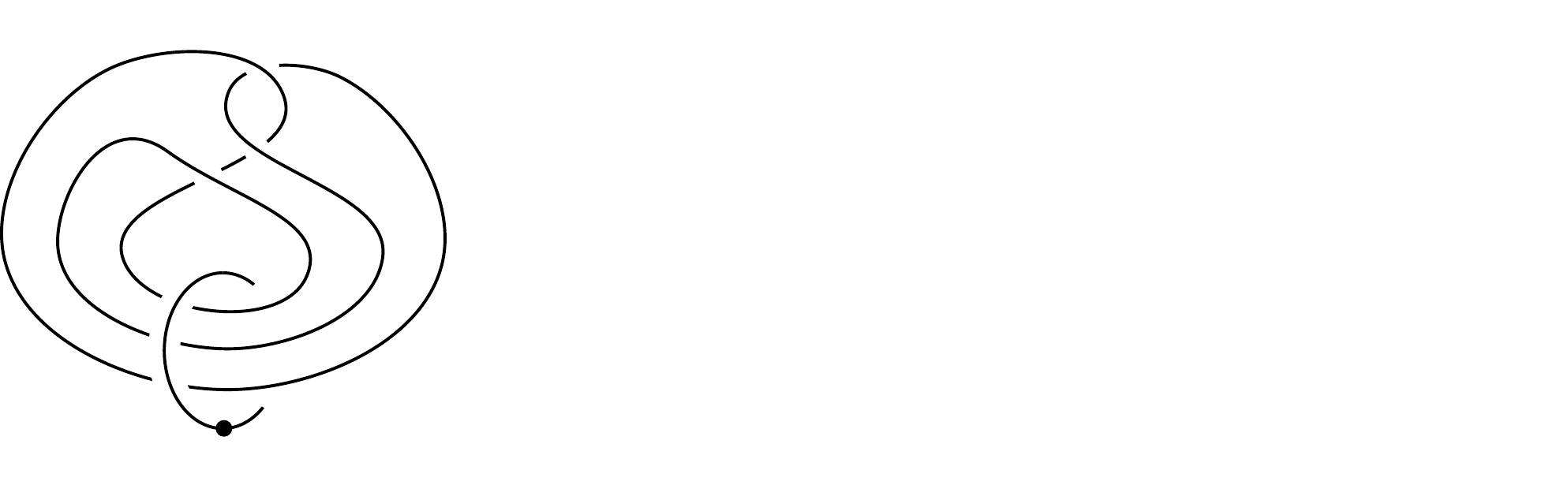
\caption{(Left) The Akbulut cork, which is strong. (Right) A cork that is not strong.}\label{fig:akbulut}
\end{figure}

\vspace{-10pt}
\begin{proof}[Proof of Theorem 1]  Choose any reasonably nice Mazur manifold $C$ with a diagram as above; label the dotted unknot $j$ and the 2-handle curve $h$.  Draw this diagram on the left side of the page, and let $C'$ be a second copy of $C$ (with corresponding dotted unknot $j'$ and 2-handle curve $h'$), drawn on the right side of the page by rotating the diagram for $C$ by $180^\circ$ through the vertical line down the center of the page. Then modify the linking between the 2-handles of $C$ and $C'$ in any manner you like so long as your diagram retains this rotational symmetry. You are, however, forbidden from introducing any new geometric linking between any 1-handle and 2-handle. If desired, you may do nothing at all and continue on with $C \natural C'$. This is your first contractible 4-manifold, denoted $W$, and it admits a smooth involution $F$ arising from the obvious symmetry of its diagram.

Observe that $Y=\partial W$ bounds another contractible 4-manifold  $W'$ obtained by reversing the roles of the 1- and 2-handle curves in $C'$, i.e.~putting a dot on $h'$ and letting $j'$ represent a zero-framed 2-handle.  We claim the involution $f=F|_{\partial W}: Y \to Y$ does not extend to a diffeomorphism of $W'$, hence that $(W', f)$ is a non-strong cork. (Note that $f$ extends to a \emph{homeomorphism} of the contractible 4-manifold $W'$ by work of Freedman \cite{freedman}.)

Let $\mu \subset Y$ denote the meridian of the 1-handle curve $j$ in $C \subset W'$. Since $f(\mu)$ is represented by the meridian of the 2-handle $j'$, $f(\mu)$ is slice in $W'$. If $f$ were to extend over $W'$, then $\mu$ would also have to be smoothly slice in $W'$.  To show that this is not the case, consider the 4-manifold $X$ whose diagram is obtained from that of $W'$ by erasing the dotted circle $h'$ on the right side of the page.  Note that $W'$ is obtained from $X$ by carving out a slice disk represented by the dotted circle $h'$, and hence that $W'$ embeds in $X$. Therefore if $\mu$ is slice in $W'$ then the curve in $\partial X$ represented by $\mu$ in the diagram (which we'll abusively still call $\mu$) is slice in $X$. Observe also that $X$ is obtained from $C$ by attaching a zero-framed 2-handle along $j'$. By construction, $j'$ is an unknot split from the handle diagram of $C$, so any disk passing over this 2-handle can be replaced with one disjoint from it. Therefore if $\mu$ is slice in $X$, then $\mu$ is slice in $C$, a contradiction. 
\end{proof}

\begin{rem*}
This recipe can be modified to the reader's taste; for example, it is possible to use a more complicated base cork $C$ or to produce a higher-order boundary diffeomorphism.
\end{rem*}
\vspace{-10pt}
\begin{figure}\center
\def\svgwidth{\linewidth}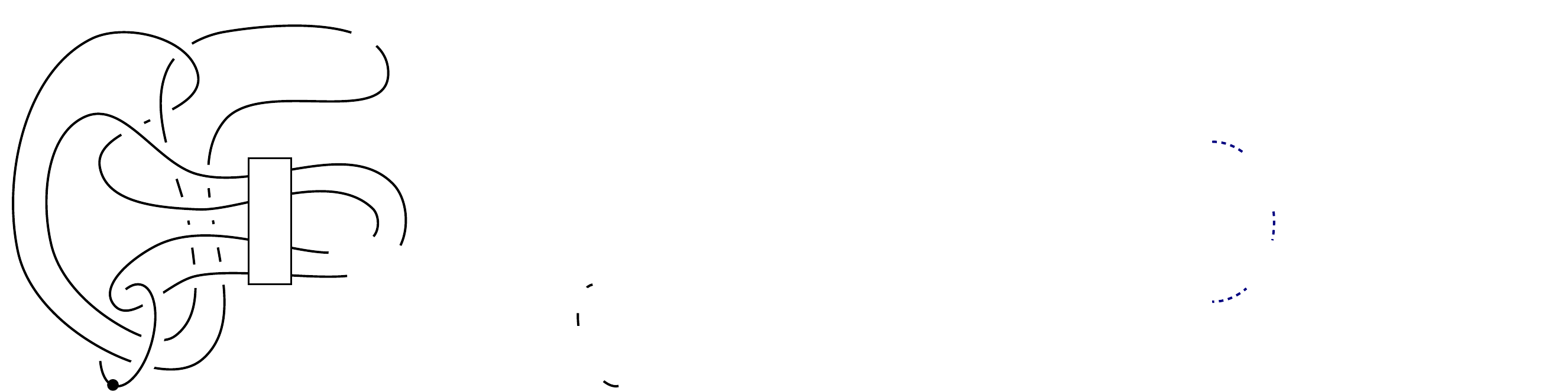
\caption{Contractible 4-manifolds $W_n$ (left) and $W_n'$ (right). Boxes denote full twists.} \label{fig:family}
\end{figure}
\begin{proof}[Proof of Theorem \ref{thm:technical}]
For all integers $n$, the manifolds $W_n$ and $W_n'$ in Figure~\ref{fig:family} are produced via the recipe outlined above, so the obvious boundary involution $f$ extends smoothly over $W_n$ and not over $W_n'$. Thus it remains to show that $W_n$ and $W_n'$ are both Mazur-type for all $n$, and that  the pair are homeomorphic but not diffeomorphic for $|n|\gg0$.  

To see that $W_n'$ is Mazur-type, observe that the 2-component link $h\cup h'$ is a Hopf link, thus the corresponding 1- and 2-handles form a canceling pair. To see that $W_n$ is Mazur-type, first perform the handle slide of $h$ indicated by the orange arrow. In the new diagram, this modified 2-handle and the 1-handle represented by $j$ form a canceling pair.

Observe that $\partial W_n$ and $\partial W_n'$ are diffeomorphic, each identified with the 3-manifold $Y_n$ obtained by performing zero-framed Dehn surgery on all four link components in the diagram of $W_n$ or $W_n'$. Since $W_n$ and $W_n'$ are contractible 4-manifolds with the same boundary, they are homeomorphic by work of Freedman \cite{freedman}. For $|n| \gg0$, we will show that there are exactly two self-diffeomorphisms of $Y_n$ up to isotopy, namely $f$ and the identity. Assuming this for the moment, we show that $W_n$ and $W_n'$ (for $|n|\gg0$) are not diffeomorphic: To the contrary, suppose there exists a diffeomorphism $\phi$ of $Y_n$ extending to a diffeomorphism from $W_n'$ to $W_n$. Since $f$ extends over $W_n$, the composition $\phi^{-1} \circ f \circ \phi$ extends to a diffeomorphism of $W_n'$. However, $\phi^{-1} \circ f \circ \phi$ cannot be isotopic to the identity (because the isotopy class of the identity is preserved under conjugation), so it must be isotopic to $f$. This implies that $f$ extends to a diffeomorphism of $W_n'$, a contradiction.

It remains to show that, for $|n| \gg 0$, there are exactly two self-diffeomorphisms of $Y_n$ up to isotopy, namely $f$ and the identity. Since $f$ fails to extend to a diffeomorphism of $W_n'$, it is not isotopic to the identity. So it suffices to show that the mapping class group of $Y_n$ has no more than two elements. To do so, we realize $Y_n$ as $-1/2n$-surgery on the knot $\gamma \subset Y_0$ depicted in Figure~\ref{fig:family}. Using SnapPy \cite{snappy} and Sage \cite{sagemath}, we verify that $\gamma \subset Y_0$ has hyperbolic exterior; all computer calculations for this proof are documented in \cite{hp:notstrongdoc}.  For large $|n|$, Thurston's hyperbolic Dehn surgery theorem \cite{thurston:notes} ensures that the Dehn-filled 3-manifold $Y_n$ is hyperbolic and that the core $\tilde \gamma \subset Y_n$ of the surgered solid torus is the unique shortest closed geodesic in  $Y_n$.  By Mostow rigidity \cite{mostow}, the mapping class group of a hyperbolic 3-manifold is isomorphic to its isometry group.   As in \cite[\S5]{kojima}, we note that any isometry of $Y_n$ fixes the short geodesic $\tilde \gamma$ setwise, hence $\operatorname{Isom}(Y_n)$ is isomorphic to a subgroup of $\operatorname{Isom}(Y_n \setminus \tilde \gamma)$.  Since the hyperbolic 3-manifolds $Y_n \setminus \tilde \gamma$ and $Y_0 \setminus \gamma$ are naturally identified, we have $\operatorname{Isom}(Y_n \setminus \tilde \gamma) \cong \operatorname{Isom}(Y_0 \setminus \gamma)$. Using \cite{snappy,sagemath}, we calculate $\operatorname{Isom}(Y_0 \setminus \gamma)\cong \mathbb{Z}_2$, so we conclude that $\operatorname{Isom}(Y_n)$ contains at most (indeed, exactly) two elements.
\end{proof}

\begin{rem*}
For $|n| \gg0$, the exotic 4-manifolds  $W_n$ and $W_n'$ have qualitatively distinct diffeomorphism groups: All diffeomorphisms of $\partial W_n$ extend to diffeomorphisms of $W_n$, whereas a diffeomorphism of $\partial W_n'$ extends over $W_n'$ only if it is isotopic to the identity.
\end{rem*}

\begin{proof}[Proof of Theorem \ref{thm:achiral}]
Let $W_n$ and its mirror $-W_n$ be the contractible 4-manifolds depicted in Figure~\ref{fig:WandMirror}, with $Y_n$ and $-Y_n$ denoting their boundaries.  
 We will begin by demonstrating that for any integer $n$, there is a diffeomorphism $f$ from $Y_n$ to $-Y_n$ that extends to a homeomorphism from $W_n$ to $-W_n$. We will then show that for all $n$, $f$ does not extend to a diffeomorphism from $W_n$ to $-W_n$, and that for $|n|$ sufficiently large, $W_n$ is not diffeomorphic to $-W_n$.

\smallskip

\begin{figure}\center
\def\svgwidth{.9\linewidth}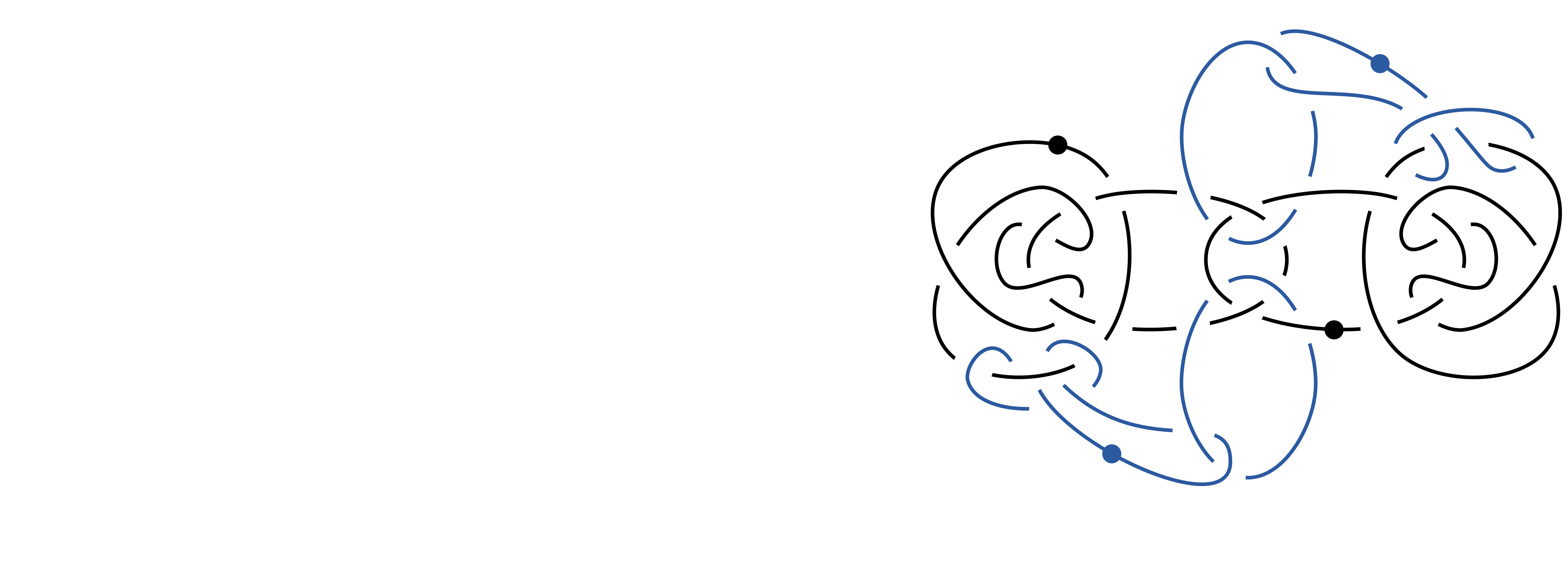
\caption{Contractible manifolds $W_n$ and $-W_n$ from the proof of Theorem~\ref{thm:achiral}.}\label{fig:WandMirror}
\end{figure}

Observe that $Y_n$ has a surgery diagram given by the framed link shown in Figure~\ref{fig:Yn} (where the curves $\ell_1$ and $\ell_2$ may be ignored for the moment).  Rotating this diagram by $180^\circ$ through its center carries the framed link to its mirror, which is a surgery diagram for $-Y_n$. This isotopy of framed links induces a diffeomorphism of 3-manifolds $f:Y_n \to -Y_n$. To extend $f$ to a homeomorphism $W_n \to -W_n$, we rotate the diagram of $W_n$ by $180^\circ$ through its center and then reverse the roles of the black 1- and 2-handle curves, yielding the diagram for $-W_n$. The latter operation corresponds to cork-twisting embedded copies of both the Akbulut cork and its mirror, which induces a homeomorphism supported away from the boundary and thus provides the desired extension of $f$ to a homeomorphism $W_n \to -W_n$.

Now we'll argue that $f$ does not extend to a diffeomorphism. Consider the curves $\mu\subset Y_n$ and $f(\mu)\subset -Y_n$ marked in Figure \ref{fig:WandMirror}. Observe that  $f(\mu)$ bounds a smoothly embedded disk in $-W_n $. Therefore, if $f$ were to extend smoothly, $\mu$ would have to bound a disk in $W_n$. To show that this is not the case, consider the 4-manifold $X_n$ whose diagram is obtained from that of $W_n$ by erasing the black dotted 1-handle on the right side of $W_n$.   Note that $W_n$ is obtained from $X_n$ by carving out a slice disk represented by that dotted circle, and hence that $W_n$ embeds in $X_n$. Therefore, if $\mu$ is slice in $W_n$, then the curve represented by $\mu$ (abusively still called $\mu$) in $\partial X_n$ is slice in $X_n$. Observe now that $X_n$ is diffeomorphic to the boundary connected sum of the  Akbulut cork with $S^2\times D^2$. Therefore, as argued in the proof of Theorem $1$, if $\mu$ is slice in $X_n$, then $\mu$ is slice in the Akbulut cork. Thus $\mu$ is not slice in $W_n$, so $f$ does not extend to a diffeomorphism.

\begin{figure}\center
\def\svgwidth{.4\linewidth}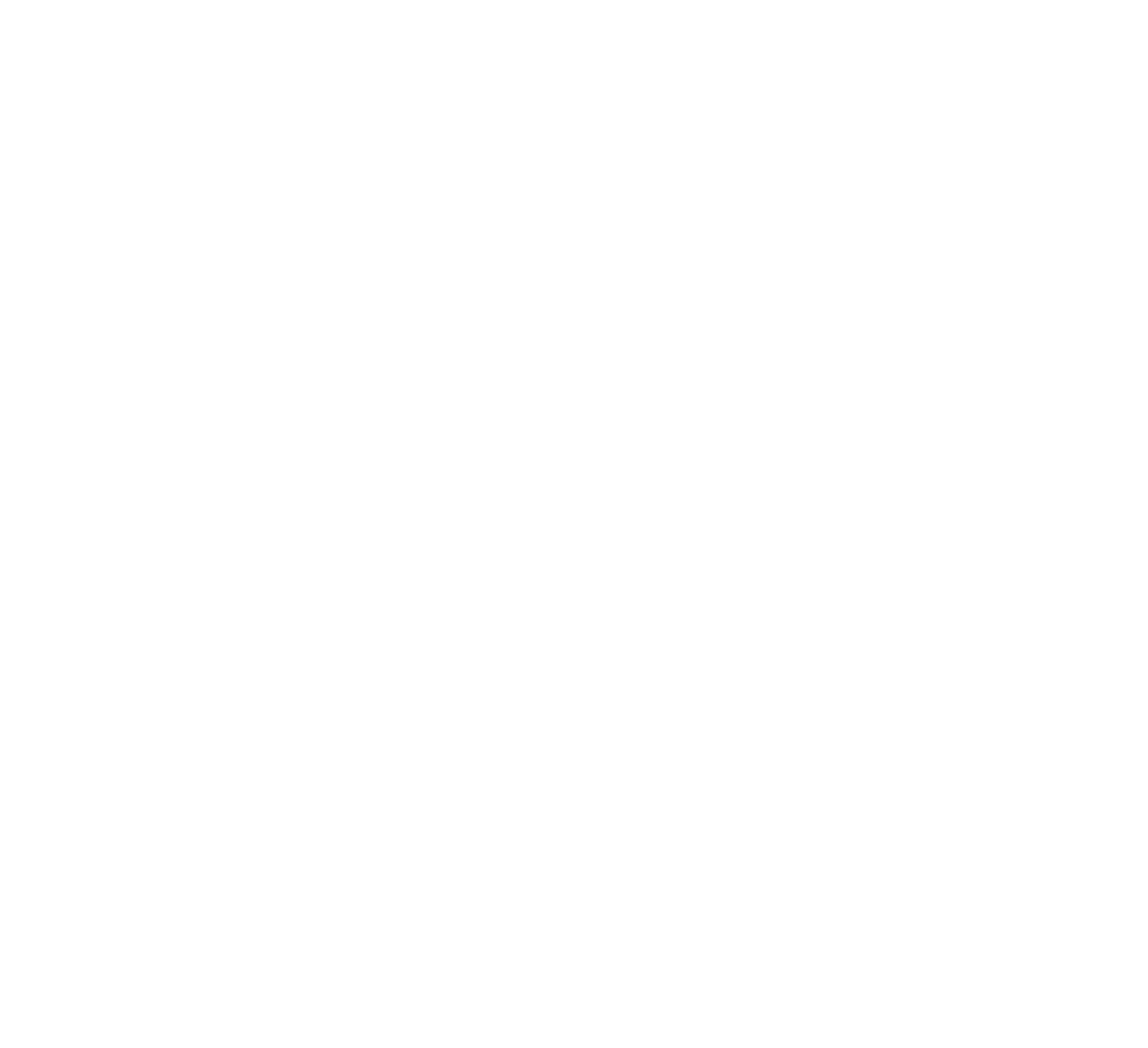
\caption{}\label{fig:Yn}
\end{figure}

It remains to show that for sufficiently large $|n|$, $W_n$ is not diffemorphic to $-W_n$. Observe that any such diffeomorphism must restrict to an orientation-preserving diffeomorphism $\phi:Y_n\to -Y_n$ on the boundary, which can also be viewed as an orientation-reversing self-diffeomorphism of $Y_n$. We will show that, for sufficiently large $|n|$, every orientation-reversing self-diffeomorphism of $Y_n$ is isotopic to the self-diffeomorphism $f$ defined above.  Since $f$ cannot be smoothly extended, there can be no diffeomorphism $W_n \to -W_n$.

We will again classify the mapping class group of $Y_n$ for $|n| \gg 0$, using an argument nearly identical to the one in the proof of Theorem \ref{thm:technical}. First observe that $Y_n$ is obtained from $Y_0$ by $\pm 1/n$-surgery on the link $\ell=\ell_1\cup\ell_2$ in $Y_0$ depicted in Figure~\ref{fig:Yn}. Using SnapPy \cite{snappy} and Sage \cite{sagemath}, we verify that $\ell\subset Y_0$ has hyperbolic exterior; as above, computer calculations for this proof are documented in \cite{hp:notstrongdoc}.  For large $|n|$, Thurston's hyperbolic Dehn surgery theorem \cite{thurston:notes} ensures that the Dehn-filled 3-manifold $Y_n$ is hyperbolic and that the cores $\tilde \ell \subset Y_n$ of the surgered solid tori are the two shortest closed geodesics in  $Y_n$. By Mostow rigidity \cite{mostow}, the mapping class group of a hyperbolic 3-manifold is isomorphic to its isometry group.   As in \cite[\S5]{kojima}, we note that any isometry of $Y_n$ fixes the pair of short geodesics $\tilde \ell$ setwise, hence $\operatorname{Isom}(Y_n)$ is isomorphic to a subgroup of $\operatorname{Isom}(Y_n \setminus \tilde \ell)$.  Since the hyperbolic 3-manifolds $Y_n \setminus \tilde \ell$ and $Y_0 \setminus \ell$ are naturally identified, we have $\operatorname{Isom}(Y_n \setminus \tilde \ell) \cong \operatorname{Isom}(Y_0 \setminus \ell)$. Using \cite{snappy,sagemath}, we calculate $\operatorname{Isom}(Y_0 \setminus \ell)\cong \mathbb{Z}_2$, so we conclude that $\operatorname{Isom}(Y_n)$ contains at most two elements. Indeed it has exactly two: the identity and $f$ (which is orientation-reversing and hence distinct from the identity). Thus, as claimed, $f$ is the only orientation-reversing self-diffeomorphism of $Y_n$.
\end{proof}

\smallskip

\textbf{Acknowledgements} \, We thank Danny Ruberman and Irving Dai for helpful conversations. K.H.~was supported in part by NSF grant DMS-1803584, and L.P.~was supported in part by NSF grant DMS-1902735.

\begingroup
\titleformat*{\section}{\bf \large}
\bibliographystyle{gtart}
\bibliography{biblio}
\endgroup

\end{document}